\documentclass[12pt]{article}
\usepackage[a4paper,margin=2.6cm,top=2.6cm,bottom=3cm]{geometry}
\usepackage[T1]{fontenc}
\usepackage{mathpazo}
\usepackage[scaled=0.92]{helvet}
\usepackage{amsmath,amssymb,amsthm,graphicx,booktabs}
\usepackage[final]{microtype}
\usepackage{xcolor}
\definecolor{accent}{RGB}{24,40,72}
\definecolor{gold}{RGB}{242,183,5}
\definecolor{cream}{RGB}{255,252,240}
\definecolor{muted}{RGB}{100,100,96}
\usepackage{enumitem}
\setlist{nosep, leftmargin=14pt}
\usepackage[font=small,labelfont={sf,bf,color=accent},labelsep=period]{caption}
\usepackage{titlesec}
\titleformat{\section}{\normalfont\sffamily\large\bfseries\color{accent}}{\thesection}{0.8em}{}
\titleformat{\subsection}{\normalfont\sffamily\normalsize\bfseries\color{accent}}{\thesubsection}{0.8em}{}
\titlespacing*{\section}{0pt}{2.2ex plus .5ex minus .2ex}{1.0ex plus .2ex}
\titlespacing*{\subsection}{0pt}{1.8ex plus .4ex minus .2ex}{0.8ex plus .2ex}
\usepackage{fancyhdr}
\fancypagestyle{plain}{\fancyhf{}\fancyfoot[C]{\footnotesize\sffamily\color{muted}\thepage}}
\usepackage[colorlinks,linkcolor=accent,citecolor=accent,urlcolor=accent,
  pdftitle={A theoretically exact filtered back-propagation formula for single-time wave inversion},
  pdfauthor={Markus Haltmeier}]{hyperref}

\newtheoremstyle{thm}{0pt}{0pt}{\itshape}{}{\sffamily\bfseries\color{accent}}{.}{0.5em}{}
\newtheoremstyle{rem}{0pt}{0pt}{\normalfont}{}{\sffamily\bfseries\color{accent}}{.}{0.5em}{}
\theoremstyle{thm}
\newtheorem{theorem}{Theorem}
\newtheorem{proposition}[theorem]{Proposition}
\newtheorem{lemma}[theorem]{Lemma}
\newtheorem{corollary}[theorem]{Corollary}
\theoremstyle{rem}
\newtheorem{remark}[theorem]{Remark}

\usepackage[most]{tcolorbox}
\tcbset{thmbox/.style={enhanced,breakable,sharp corners,boxrule=0pt,frame hidden,
  colback=cream,borderline west={2.5pt}{0pt}{gold},
  left=8pt,right=8pt,top=5pt,bottom=5pt,before skip=1.6ex,after skip=1.6ex}}
\tcolorboxenvironment{theorem}{thmbox}
\tcolorboxenvironment{proposition}{thmbox}
\tcolorboxenvironment{lemma}{thmbox}
\tcolorboxenvironment{corollary}{thmbox}
\tcolorboxenvironment{remark}{thmbox,colback=white,borderline west={1.2pt}{0pt}{gold}}

\newcommand{\R}{\mathbb{R}}
\newcommand{\Rad}{\mathcal{R}}
\newcommand{\W}{\mathcal{W}}
\newcommand{\F}{\mathcal{F}}
\newcommand{\A}{\mathcal{A}}
\newcommand{\Ht}{\mathcal{H}}
\newcommand{\Si}{\operatorname{Si}}
\newcommand{\supp}{\operatorname{supp}}

\renewenvironment{abstract}{\small\noindent\begin{minipage}{\linewidth}%
  {\sffamily\bfseries\color{accent}Abstract.}\ }{\end{minipage}}
\newenvironment{keywords}{\par\vspace{1.2ex}\small\noindent{\sffamily\bfseries\color{accent}Keywords.}\ }{\par}

\begin{document}
\thispagestyle{plain}
\begin{center}
{\sffamily\bfseries\LARGE\color{accent} A theoretically exact filtered back-propagation\\[0.3ex] formula for single-time wave inversion\par}
\vspace{2.2ex}
{\large Markus Haltmeier\par}
\vspace{0.8ex}
{\small Department of Mathematics, University of Innsbruck\\
Technikerstra\ss e 13, 6020 Innsbruck, Austria\\
\href{mailto:markus.haltmeier@uibk.ac.at}{\texttt{markus.haltmeier@uibk.ac.at}}\par}
\vspace{1.2ex}
{\small\sffamily\color{muted} Submitted conference paper (4-page proceedings version)\par}
\vspace{1ex}
{\color{gold}\rule{0.35\linewidth}{1.2pt}}
\end{center}
\vspace{1ex}

\begin{abstract}
We consider single-time wave inversion, that is, recovering the initial
pressure $f$ from the wave field $\W_Tf = u(\cdot,T)$ at a single time $T$,
where $u$ solves the wave equation with sound speed $c$ and initial data
$(f,0)$. In two dimensions this is the main reconstruction step of full-field photoacoustic projection imaging, where each camera image is a two-dimensional wave field at a single time instant. An efficient direct method is back-propagation by
the Fourier multiplier $2\cos(cT|\xi|)$. It yields exact inversion in odd
dimensions but not in two. We give its error in closed form, with bounds, and
derive an exact explicit filtered back-propagation $f = 2\W_T\mathcal M_K\W_Tf$ in analogy
to filtered backprojection for the Radon transform.
\end{abstract}

\begin{keywords}
Full-field detection, wave equation, reconstruction formula, photoacoustic tomography

\par\vspace{0.6ex}\noindent{\sffamily\bfseries\color{accent}MSC 2020.}\ 35L05, 35R30, 44A12, 92C55
\end{keywords}
\vspace{2ex}

\section{Introduction}\label{sec:intro}

In camera-based full-field photoacoustic tomography (PAT), an optical phase-contrast
system and a camera record projections of the 3D acoustic
pressure field at a single time $T$ after the laser pulse
\cite{nuster2010}, instead of time-resolved signals on
a detection surface as in standard PAT
\cite{poudel2019survey}. Because the X-ray projection
commutes with the Laplacian, every camera image is itself a two-dimensional
wave field whose initial value is the linear projection of the initial pressure
\cite{nuster2010}. Reconstruction thus splits into a 2D
single-time wave inversion per projection, followed by a Radon inversion
across projections. This is how camera-based PAT is processed in
practice \cite{nuster2019,nuster2021}. It is modular and fast: every camera
image is processed independently at the cost of a few FFTs. It also lets incomplete images be completed in 2D first before the final image formation step. Note that the reduction to 2D uses a constant speed of sound; for variable sound speed, full 3D methods are a natural alternative \cite{zangerl2019,nguyen2022}.

\textbf{Main contributions.} (i) We show that Fourier back-propagation,
while exact in odd dimensions (Prop.~\ref{prop:odd}), leaves a smooth bias in
2D, which we give in closed form (Prop.~\ref{prop:bias}). (ii) For every
$cT>R$ ($R$ the source radius) we derive an explicit radial filter $\mathcal M_K$ that
gives the exact inversion formula $f = 2\W_T\mathcal M_K\W_Tf$. The formula is expressed
explicitly in physical space by circular means (Thm.~\ref{thm:filter},
Cors.~\ref{cor:trunc}--\ref{cor:phys}), and all such filters within this
class are characterised. This results in a formula similar to the well-known
filtered backprojection (FBP) formulas for standard X-ray CT
\cite{natterer2001} as well as for standard PAT
\cite{haltmeier2014universal}.
(iii) Simulations compare back-propagation, filtered back-propagation and
Radon-domain band selection in accuracy, noise and run time.

\section{Theory}\label{sec:fdb}

\subsection{Problem  statement} 

Let $f\in C_c^\infty(\R^d)$ with $\supp f\subseteq \overline{B_d(0,R)}$, and let $u$ solve $\partial_t^2u = c^2 \Delta u$
with $u(\cdot,0)=f$, $\partial_t u(\cdot,0)=0$ where $c$ is the constant speed of sound. Single-time wave inversion is the problem of recovering $f$ from
\begin{equation}\label{eq:model}
g=\W_Tf\triangleq u(\cdot,T)=\cos\bigl(cT\sqrt{-\Delta}\bigr)f  \,,
\end{equation}
where $cT>R$; see \cite{zangerl2019,nguyen2022} for uniqueness and stability. 
With respect to the $L^2(\R^d)$ inner product, $\W_T$
is self-adjoint and $\W_Tg$ is the time reversal of the data
$(g,0)$ from $T$ to $0$.

\begin{remark}[Relevance for PAT]
For full-field PAT, $d=2$ and $f = X_\theta f_{\textnormal{3D}}$ is the
projection of the 3D initial pressure in direction $\theta$; if
$\supp f_{\textnormal{3D}}\subseteq\overline{B_3(0,R)}$ then
$\supp f\subseteq\overline{B_2(0,R)}$ \cite{nuster2010}. We assume complete
images; in practice the sample shadows a central disc, and data outside the
shadow are treated in separate work. Besides Radon-domain band selection
\cite{nuster2010} and iterative methods \cite{zangerl2019,nguyen2022}, Fourier
back-propagation \eqref{eq:fdb} is used in practice \cite{nuster2019,nuster2021};
its error is characterised analytically in this work.
\end{remark}

\subsection{Notation} 

We denote by $\hat h(\xi) = \F [h](\xi) \triangleq \int_{\R^d} h(x)\, e^{-i x\cdot\xi}\, \mathrm{d}x$ the Fourier transform of $h$ on $\R^d$.  For a radial symbol $m$, the Fourier multiplier $\mathcal M[m]$ acts by $\F [ \mathcal M[m] h ] (\xi) \triangleq m(|\xi|) \F h(\xi)$; such multipliers commute with each other. In particular, $\W_T=\mathcal M[\cos(cT\rho)]$.

The Radon transform of a function $h$ on $\R^d$ is defined by
$\Rad h(\varphi,s) \triangleq \int_{x\cdot\varphi=s}h(x)\, \mathrm{d}x$ for
$\varphi\in S^{d-1}$ and $s\in\R$. For $d=2$, the FBP
inversion formula for the Radon transform reads
$h=\Rad^*\Lambda\Rad h /(4\pi)$, where
$\Rad^*G(x)=\int_{S^1}G(\varphi,x\cdot\varphi)\,d\varphi$ is the $L^2$-adjoint of
$\Rad$, known as the backprojection operator, and $\Lambda = \Ht_s\partial_s$
is the ramp filter with symbol $|\sigma|$; here $\Ht_s$ is the Hilbert
transform in $s$. We write $\Lambda G=\lambda*G$ with the ramp kernel
$\lambda\triangleq\Lambda\delta$, $\lambda(s)=-1/(\pi s^2)$ for $s\neq0$
\cite{natterer2001}. More generally, for an even kernel $K$ on
$\R$, the Fourier slice theorem gives
\begin{equation}\label{eq:slice}
\Rad^*(K*\Rad h) =4\pi \, \mathcal M_K h \,,
\qquad
\mathcal M_K\triangleq \mathcal M\bigl[\widehat K(\rho)/\rho\bigr] \,.
\end{equation}
For $K=\lambda$ we have $\hat\lambda(\sigma)=|\sigma|$, hence
$\mathcal M_{\lambda}=I$ and \eqref{eq:slice} reduces to the FBP formula.

\subsection{Back-propagation and its error}

Fourier back-propagation (BP) \cite{nuster2019,nuster2021} is
defined by \begin{equation}\label{eq:fdb}
f_{\rm BP}
\triangleq 2\W_T g
=
\F^{-1}\bigl[2\cos(cT|\xi|)\,\hat g\,\bigr] \,,
\end{equation}
where $g=\W_Tf$, and is often used for image formation in full-field PAT.

\begin{lemma}[Auxiliary wave identities]\label{lem:basic}
(a) $2\W_T\W_T=I+\W_{2T}$, hence $f_{\rm BP}=f+\W_{2T}f$.
(b) $\Rad\W_th(\varphi,s)=(\Rad h(\varphi,s+ct)+\Rad h(\varphi,s-ct))/2$.
\end{lemma}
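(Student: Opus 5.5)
Both identities are symbol-level statements, so the plan is to work entirely on the Fourier side. For (b), I will additionally use the Fourier slice theorem, which identifies the one-dimensional Fourier transform in $s$ of $\Rad h(\varphi,\cdot)$ with $\hat h(\sigma\varphi)$.

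For (a), recall that $\W_t=\mathcal M[\cos(ct\rho)]$ and that radial Fourier multipliers compose by multiplying their symbols. The composition $2\W_T\W_T$ therefore has symbol $2\cos^2(cT\rho)$. By the double-angle formula,
\[
2\cos^2(cT\rho)=1+\cos(2cT\rho),
\]
and this is the symbol of $I+\W_{2T}$. Hence $2\W_T\W_T=I+\W_{2T}$ on $C_c^\infty$, and indeed on $L^2$ by density, since all symbols involved are bounded. Inserting $g=\W_Tf$ into the definition \eqref{eq:fdb} gives $f_{\rm BP}=2\W_T\W_Tf=f+\W_{2T}f$.

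For (b), I compare both sides after taking the one-dimensional Fourier transform in $s$ at fixed $\varphi$.
\begin{itemize}
\item By the slice theorem, the left side becomes $\widehat{\W_t h}(\sigma\varphi)=\cos(ct|\sigma|)\,\hat h(\sigma\varphi)$.
\item On the right side, a shift $s\mapsto s\pm ct$ multiplies the transform by $e^{\pm i\sigma ct}$. Averaging the two shifts gives $\cos(c t\sigma)\,\widehat{\Rad h}(\varphi,\sigma)$.
\item By the slice theorem again, $\widehat{\Rad h}(\varphi,\sigma)=\hat h(\sigma\varphi)$.
\end{itemize}
Since cosine is even, $\cos(ct\sigma)=\cos(ct|\sigma|)$, so the two transforms agree. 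Injectivity of the Fourier transform in $s$ then yields (b) for each $\varphi$.

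An alternative route to (b) is to observe that $\Rad$ intertwines $\Delta$ with $\partial_s^2$. Then $\Rad u$ solves the 1D wave equation with data $(\Rad h,0)$, and d'Alembert's formula gives (b) directly.

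I expect no real obstacle here. The only care needed is to justify the functional setting: $h\in C_c^\infty$ makes $\hat h$ a Schwartz function, so every transform above is well defined. It also gives $\W_th\in L^1$, via the Schwartz class, so that the slice theorem applies to $\W_th$.
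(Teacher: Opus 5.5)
Your proposal is correct and follows essentially the same route as the paper: (a) is the double-angle identity $2\cos^2(cT\rho)=1+\cos(2cT\rho)$ applied to the symbols, and (b) combines the Fourier slice theorem with the observation that multiplication by $\cos(ct\sigma)=(e^{ict\sigma}+e^{-ict\sigma})/2$ averages the shifts by $\pm ct$. Your extra justification that $\W_t h$ is Schwartz, so the slice theorem applies, and your alternative d'Alembert argument are both sound additions the paper does not spell out.
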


\begin{proof}
Item (a) follows from $2\cos^2(cT\rho)=1+\cos(2cT\rho)$. For (b), by the
Fourier slice theorem the 1D Fourier transform of $\Rad \W_t h(\varphi,\cdot)$ is
$\cos(ct\sigma)\hat h(\sigma\varphi)$, and multiplication by
$\cos(ct\sigma)=(e^{ict\sigma}+e^{-ict\sigma})/2$ averages the shifts by $\pm ct$.
\end{proof}

\begin{proposition}[BP is exact in odd dimensions]\label{prop:odd}
If $d$ is odd and $cT\ge R$, then $f_{\rm BP}=2\W_T\W_Tf=f$ on $B_d(0,R)$. For $d=3$,
this gives the explicit FBP-type inversion formula
\begin{align} \label{eq:inv-3d-1}
f(x)
& =2\W_T\W_Tf (x)
\\ \label{eq:inv-3d-3}
f(x)& = \frac1{2\pi} \int_{S^2} \bigl( g + cT\,\theta\cdot\nabla g \bigr)(x+cT\theta)\,d\theta
\end{align}
\end{proposition}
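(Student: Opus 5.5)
By Lemma~\ref{lem:basic}(a) we have $f_{\rm BP}=f+\W_{2T}f$. It therefore suffices to show that $\W_{2T}f$ vanishes on $B_d(0,R)$ when $d$ is odd and $cT\ge R$. This is the strong Huygens principle. I would prove it through Lemma~\ref{lem:basic}(b) and Radon inversion, since that keeps the argument inside the tools already set up.

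\textbf{Step 1: Radon transform of $\W_{2T}f$.} By (b),
\[
\Rad\W_{2T}f(\varphi,s)=\tfrac12\bigl(\Rad f(\varphi,s+2cT)+\Rad f(\varphi,s-2cT)\bigr).
\]
Since $\Rad f(\varphi,\cdot)$ is supported in $[-R,R]$, this function of $s$ vanishes for $|s|<2cT-R$. Because $cT\ge R$, we have $2cT-R\ge R$, so it vanishes for all $|s|<R$.

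\textbf{Step 2: local Radon inversion.} In odd dimensions the Radon inversion formula is local:
\[
h(x)=c_d\int_{S^{d-1}}\partial_s^{d-1}\Rad h(\varphi,s)\big|_{s=x\cdot\varphi}\,d\varphi .
\]
It involves only derivatives, with no Hilbert transform, so $h(x)$ depends only on $\Rad h$ near the hyperplanes through $x$. For $|x|<R$ every $s=x\cdot\varphi$ satisfies $|s|<R$. Step 1 then gives $\W_{2T}f(x)=0$, and hence $f_{\rm BP}=f$ on $B_d(0,R)$. The boundary case $cT=R$ follows by continuity.

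\textbf{Step 3: the $d=3$ formula.} Here I would use the Kirchhoff--Poisson representation of the solution with data $(g,0)$:
\[
\W_tg(x)=\partial_t\bigl(t\,\bar g(x,ct)\bigr),\qquad \bar g(x,r)\triangleq\tfrac1{4\pi}\int_{S^2}g(x+r\theta)\,d\theta .
\]
Differentiating at $t=T$ gives
\[
\W_Tg(x)=\tfrac1{4\pi}\int_{S^2}\bigl(g+cT\,\theta\cdot\nabla g\bigr)(x+cT\theta)\,d\theta .
\]
Multiplying by $2$ gives \eqref{eq:inv-3d-3}. One can check that this matches the symbol: the spherical mean has symbol $\sin(cT\rho)/(cT\rho)$, and $\partial_T(T\cdot)$ of it is $\cos(cT\rho)$.

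\textbf{Main obstacle.} The main care point is Step 2, namely the locality of odd-dimensional Radon inversion. It needs to be stated for a function $\W_{2T}f$ that is smooth but not compactly supported. The function is smooth with Schwartz-type decay in the Fourier variable, so the formula still applies, but this should be justified. Alternatively, one can cite Huygens' principle directly: $\W_{2T}f(x)$ depends only on $f$ on the sphere $|y-x|=2cT$, and this sphere misses $B_d(0,R)$ when $|x|<R\le cT$.
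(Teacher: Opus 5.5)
Your proof is correct, and for the central step it takes a different route from the paper.

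\textbf{The vanishing step.} The paper cites the strong Huygens principle directly (d'Alembert for $d=1$): $\W_{2T}f(x)$ depends only on $f$ near $\partial B_d(x,2cT)$, and this sphere misses $\overline{B_d(0,R)}$ when $|x|<2cT-R$. You instead derive the vanishing from Lemma~\ref{lem:basic}(b). In each direction, $\Rad\W_{2T}f(\varphi,\cdot)$ consists only of the two half-bands shifted by $\pm2cT$, so it is zero on $|s|<2cT-R$. The odd-dimensional Radon inversion formula involves only $\partial_s^{d-1}$ and no Hilbert transform, so it is local. Hence $\W_{2T}f=0$ on $B_d(0,2cT-R)\supseteq B_d(0,R)$.

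\textbf{What each route buys.} Yours is the classical plane-wave proof of Huygens' principle. It is longer than a citation, but it stays within the paper's own tools. It also makes the contrast with Section~\ref{sec:exact} transparent. For $d=2$ the ramp filter $\Lambda=\Ht_s\partial_s$ is nonlocal and spreads the shifted bands of $\A_T\Rad f$ into $|s|<R$. Cancelling that spread is exactly the job of condition~\eqref{eq:K}. The paper's route is shorter but treats Huygens' principle as a black box.

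\textbf{The $d=3$ formula.} This step coincides with the paper's: Kirchhoff's formula, using $\partial_t(t\,\bar g(x,ct))=\partial_r(r\,\bar g(x,r))|_{r=ct}$, differentiated under the integral. Your symbol check is a useful consistency test.

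\textbf{Two small corrections.}
\begin{itemize}
\item The ``main obstacle'' you flag is not one. Finite propagation speed gives $\supp\W_{2T}f\subseteq\overline{B_d(0,R+2cT)}$, so $\W_{2T}f\in C_c^\infty(\R^d)$ and Radon inversion applies verbatim.
\item No continuity argument is needed at $cT=R$. Step~1 then already gives vanishing on the open interval $|s|<R$, which contains $x\cdot\varphi$ for every $|x|<R$.
\end{itemize}
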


\begin{proof}
By Lemma~\ref{lem:basic}(a) it suffices that $\W_{2T}f=0$ on $B_d(0,R)$. For odd
$d$, $\W_tf(x)$ depends only on $f$ near the sphere $\partial B_d(x,ct)$ (strong
Huygens principle for $d\ge3$, d'Alembert's formula
$\W_t f (s) = f(s-ct)/2+f(s+ct)/2$ for $d=1$). Hence
$\W_{2T}f(x)=0$ whenever $\partial B_d(x,2cT)\cap\overline{B_d(0,R)}=\emptyset$,
in particular for $|x|<2cT-R$, and $2cT-R\ge R$. Formula \eqref{eq:inv-3d-3} follows from Kirchhoff's formula
$\W_Tg(x)= \partial_r[r\int_{S^2}g(x+r\theta)\,d\theta/(4\pi )]_{r=cT} $
\cite{john1982} by differentiating under the integral.
\end{proof}

In two dimensions the strong Huygens principle fails and thus 
$\W_{2T}f \neq0$ in general.

\begin{proposition}[BP error in 2D]\label{prop:bias}
Let $d=2$ and $cT>R$. Then for all $x\in B_d(0,R)$ we have 
\begin{equation}\label{eq:bias}
E_{\rm BP}(x) = f_{\rm BP}(x)-f(x)
=-\frac{cT}{\pi}\int_{B_d(0,R)}
\frac{f(y)\,dy}{\bigl(4c^2T^2-|x-y|^2\bigr)^{3/2}} .
\end{equation}
Moreover, $|E_{\rm BP}(x)|\le \| f\|_{1}  cT / (8\pi(c^2T^2-R^2)^{3/2})$, and if $f\ge0$, $f\not\equiv0$, then $f(x)- f_{\rm BP}(x)$ is positive with lower bound $\|f\|_{1}/(8\pi c^2T^2)$.\end{proposition}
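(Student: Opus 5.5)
By Lemma~\ref{lem:basic}(a) we have $E_{\rm BP}=\W_{2T}f$, so the plan is to evaluate $\W_{2T}f$ on $B_d(0,R)$ with Poisson's formula for the 2D wave equation. For initial data $(f,0)$ and $t>0$,
\[
\W_tf(x)=\partial_t\Bigl[\frac{1}{2\pi c}\int_{|x-y|<ct}\frac{f(y)\,dy}{\sqrt{c^2t^2-|x-y|^2}}\Bigr].
\]
Take $x\in B_d(0,R)$ and $y\in\supp f\subseteq\overline{B_d(0,R)}$. Then $|x-y|<2R<2cT$, so near $t=2T$ the whole support of $f$ lies inside the disc $\{|x-y|<ct\}$. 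The integrand is therefore smooth in $t$ there, with no boundary contribution, and we may differentiate under the integral sign. Using $\partial_t(c^2t^2-r^2)^{-1/2}=-c^2t\,(c^2t^2-r^2)^{-3/2}$ and setting $t=2T$ gives
\[
\W_{2T}f(x)=-\frac{1}{2\pi c}\,c^2(2T)\int\frac{f(y)\,dy}{(4c^2T^2-|x-y|^2)^{3/2}},
\]
which is \eqref{eq:bias}. I expect this step to be the main obstacle. One must make sure Poisson's formula applies with the normalization $\partial_t u(\cdot,0)=0$, and that the differentiation is legitimate. The latter holds because the kernel is bounded, with $4c^2T^2-|x-y|^2\ge 4c^2T^2-4R^2>0$.

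For the upper bound, use $|x-y|\le 2R$ to get
\[
4c^2T^2-|x-y|^2\ge 4(c^2T^2-R^2),
\]
so the kernel is at most $1/(8(c^2T^2-R^2)^{3/2})$. Multiplying by $cT/\pi$ and integrating against $|f|$ gives
\[
|E_{\rm BP}(x)|\le \frac{cT\,\|f\|_1}{8\pi(c^2T^2-R^2)^{3/2}}.
\]

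For the lower bound, assume $f\ge0$ with $f\not\equiv0$. Then $f-f_{\rm BP}=-E_{\rm BP}$ is a strictly positive kernel integrated against $f$, hence positive. Moreover $4c^2T^2-|x-y|^2\le 4c^2T^2$, so the kernel is at least $1/(8c^3T^3)$. This gives
\[
f(x)-f_{\rm BP}(x)\ge \frac{cT}{\pi}\cdot\frac{\|f\|_1}{8c^3T^3}=\frac{\|f\|_1}{8\pi c^2T^2}.
\]
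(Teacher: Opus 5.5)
Your proposal is correct and follows the paper's proof essentially step for step. You reduce to $E_{\rm BP}=\W_{2T}f$ via Lemma~\ref{lem:basic}(a), apply Poisson's formula with a fixed integration domain (since $\overline{B_d(0,R)}\subseteq B_d(x,ct)$ for $ct>2R$), differentiate under the integral to get the constant $-cT/\pi$, and obtain both bounds from $0\le|x-y|<2R$, all of which checks out.
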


\begin{proof}
According to Poisson's formula \cite{john1982}, $\W_t f(x)$ equals 
\[\partial_t \int_{B_d(x,ct)} f(y) (c^2t^2-|x-y|^2)^{-1/2}/(2\pi c) dy .\]
For $x\in B_d(0,R)$ and $ct > 2R$ the ball $B_d(x,ct)$ contains $\overline{B_d(0,R)}$, so the domain of integration is fixed and we may differentiate under the
integral; with Lemma~\ref{lem:basic}(a) this gives $\W_{2T}f(x)$. The bounds follow from
$0\le|x-y|<2R$.
\end{proof}

The kernel in \eqref{eq:bias} is smooth on $|x-y|\le2R$, so the error is a
$C^\infty$ function on $B_d(0,R)$: back-propagation produces no edge artefacts, but
a low-frequency bias $E_{\rm BP}(x)$, bounded by multiples of the total mass $\| f\|_{1}$. For
$f\ge0$ and $cT \gg  R$ it is $E_{\rm BP}(x) \approx  -\| f\|_{1} /(8\pi c^2T^2)$. For a homogeneous disc $f=\chi_{B_d(0,R)}$ the relative error at $x=0$ is $2cT/\sqrt{4c^2T^2-R^2}-1$, i.e.\ $9.1\%$ for $cT=1.25R$ and $3.3\%$ for $cT=2R$.

\subsection{Exact filtered back-propagation in 2D}\label{sec:exact}

Now we derive the exact filtered back-propagation formula for $d=2$. Since $\W_T^*=\W_T$,
back-propagation $f_{\rm BP}=2\W_T^*\W_Tf$ is twice the normal operator applied
to $f$. By Lemma~\ref{lem:basic}, in the Radon domain it becomes
$\Rad f_{\rm BP}=\A_T\Rad f$ with
\begin{equation}\label{eq:AT}
\A_T G(s)\triangleq G(s)+ G(s-2cT)/2+G(s+2cT)/2 \,,
\end{equation}
that is, $\A_T=2\W^{(1)}_T\W^{(1)}_T$ is twice the normal operator of the 1D
wave operator $\W^{(1)}_T$ in each Radon  
direction. Thm.~\ref{thm:filter} requires that $\A_TK$ equals on $(-2R,2R)$
the ramp kernel $\lambda$, defined by $\hat\lambda(\sigma) =|\sigma|$. Explicit
kernels satisfying this are constructed in Cors.~\ref{cor:trunc} and
\ref{cor:series}.

\begin{theorem}[Exact filters]\label{thm:filter}
Let $cT>R$ and let $K$ be an even 1D distribution with $K-\lambda\in L^1(\R)$
and
\begin{equation}\label{eq:K}
\A_T K (u)= \lambda(u)
\; \text{ for } \; |u|<2R \,.
\end{equation}
Then for $x \in B_d(0,R)$ we have
\begin{align}
\label{eq:exact2}
f(x)&= \tfrac1{4\pi}\Rad^*\bigl(K*\Rad f_{\rm BP}\bigr)(x)
\\ \label{eq:exact3}
f(x)&= \mathcal M_K\,f_{\rm BP}(x)
\\  \label{eq:exact4}
f(x)&=\F^{-1}\Bigl[2\,\frac{\widehat K(|\xi|)}{|\xi|}\cos(cT|\xi|)\,\hat g\Bigr]
(x)
\\
\label{eq:exact1}
f(x)&=2\, (\W_T\mathcal M_K\W_T f) (x)
.
\end{align}
\end{theorem}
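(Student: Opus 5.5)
The plan is to prove \eqref{eq:exact2} first, working in the Radon domain, and then obtain the other three forms by Fourier bookkeeping. By Lemma~\ref{lem:basic}(b) applied twice (equivalently, by Lemma~\ref{lem:basic}(a)), we have $\Rad f_{\rm BP}=\A_T\Rad f$, with $\A_T$ as in \eqref{eq:AT}. Since $K$ is even, $\A_T$ is convolution with the even measure $\delta+\tfrac12\delta_{2cT}+\tfrac12\delta_{-2cT}$, and convolutions commute. Hence
\[
K*\Rad f_{\rm BP}(\varphi,\cdot)=(\A_TK)*\Rad f(\varphi,\cdot).
\]

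Now fix $x\in B_d(0,R)$ and put $s=x\cdot\varphi$, so $|s|<R$. The function $\Rad f(\varphi,\cdot)$ is supported in $[-R,R]$. Therefore $(\A_TK)*\Rad f(\varphi,s)$ only involves values of $\A_TK$ at $u=s-s'$ with $|u|<2R$. By \eqref{eq:K}, on this set $\A_TK$ coincides with $\lambda$.

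To make this rigorous, I would write $\A_TK=\lambda+D$, where $D$ is a distribution vanishing on $(-2R,2R)$. Note that $D=(\A_T-I)\lambda+\A_T(K-\lambda)$, and $(\A_T-I)\lambda$ is smooth near $0$ because it is built from shifted copies of $\lambda$. Since $D$ vanishes on an open set containing the difference set $\{s-s':|s'|\le R\}$, the pairing $D*\Rad f(\varphi,s)$ is $0$. Thus $K*\Rad f_{\rm BP}(\varphi,s)=\Lambda\Rad f(\varphi,s)$ for all $\varphi$ and all $|s|<R$. Backprojecting at $x$ only samples $s=x\cdot\varphi\in(-R,R)$, so the FBP formula $f=\Rad^*\Lambda\Rad f/(4\pi)$ gives \eqref{eq:exact2}.

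For \eqref{eq:exact3} I apply the slice identity \eqref{eq:slice} with $h=f_{\rm BP}$. The hypothesis $K-\lambda\in L^1$ guarantees that $\widehat K(\sigma)=|\sigma|+\widehat{(K-\lambda)}(\sigma)$ is a continuous even function of at most linear growth. So $\widehat K(\rho)/\rho$ is a legitimate multiplier on Schwartz-type data such as $f_{\rm BP}$, which is smooth with $\widehat{f_{\rm BP}}=2\cos^2(cT|\xi|)\hat f$. Then \eqref{eq:exact4} follows by inserting $\widehat{f_{\rm BP}}=2\cos(cT|\xi|)\hat g$ from \eqref{eq:fdb}. Finally, \eqref{eq:exact1} follows because $\mathcal M_K$ commutes with $\W_T=\mathcal M[\cos(cT\rho)]$, both being radial multipliers.

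The main obstacle is justifying \eqref{eq:slice} and the localization step for the distribution $K$, which is singular like $\lambda$ at $0$, with $\Rad f_{\rm BP}(\varphi,\cdot)$. The latter is \emph{not} compactly supported: it has copies at $\pm 2cT$. I would handle this by splitting $K=\lambda+(K-\lambda)$. The $\lambda$ part is the classical ramp filter acting on smooth, decaying functions. The $L^1$ part is an ordinary convolution with bounded $\Rad f_{\rm BP}$, and its Fourier transform is handled by Fubini. Possible behaviour of $\widehat K(\rho)/\rho$ near $\rho=0$ is harmless because $\widehat K(0)$ is finite and the singularity $1/\rho$ is integrable in 2D.
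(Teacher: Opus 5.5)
Your proposal is correct and follows the paper's proof essentially step for step. It derives $\Rad f_{\rm BP}=\A_T\Rad f$, moves $\A_T$ onto the kernel, writes $\A_TK=\lambda+D$ and uses the support of $D*\Rad f(\varphi,\cdot)$ to localise to $|s|<R$, backprojects with the FBP formula, and then obtains \eqref{eq:exact3}--\eqref{eq:exact1} from \eqref{eq:slice} and the commutation of radial multipliers. One side remark is inaccurate but harmless: $\Rad f_{\rm BP}(\varphi,\cdot)$ \emph{is} compactly supported (in $[-2cT-R,2cT+R]$, as a sum of three shifted $C_c^\infty$ bands), so $K*\Rad f_{\rm BP}$ is well defined for any distribution $K$ and the obstacle you anticipate there does not arise.
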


\begin{proof}
For $\eta\in\R$ let $(\tau_\eta G)(s)\triangleq G(s-\eta)$ denote the shift in
$s$, so that $\A_T = I + (\tau_{2cT} + \tau_{-2cT})/2$. We first derive
\eqref{eq:exact2} in four steps.
\emph{Step 1 (three bands):} By \eqref{eq:AT}, $\Rad f_{\rm BP} = \A_T\Rad f$ consists of the band $\Rad f$,
supported in $|s| \le R$, and two half copies shifted by $\pm2cT$.
\emph{Step 2 (effective kernel):} As shifts commute with convolution,
$K * \Rad f_{\rm BP} = (\A_TK) * \Rad f$,
that is, filtering the three bands with $K$ is the same as filtering the
single band $\Rad f$ with the effective kernel $\A_TK$.
\emph{Step 3 (only $(-2R,2R)$ matters):} Write $\A_TK=\lambda+D$. By \eqref{eq:K}, $D=0$ on $(-2R,2R)$. Since
$\supp\Rad f(\varphi,\cdot)\subseteq[-R,R]$, we have
$\supp(D*\Rad f(\varphi,\cdot))\subseteq\supp D+[-R,R]$, which does not meet
$(-R,R)$. Hence $K*\Rad f_{\rm BP}=\Lambda\Rad f$ for $|s|<R$.
\emph{Step 4 (backprojection):} For $x\in B_d(0,R)$ and $\varphi\in S^1$
we have $|x\cdot\varphi|<R$, so $\Rad^*G(x)$ uses $G$ only on $|s|<R$. By
Step 3 and the FBP formula,
$\Rad^*(K*\Rad f_{\rm BP})(x)= \Rad^*\Lambda\Rad f(x) = 4\pi f(x)$,
which gives \eqref{eq:exact2}.

We finally derive the remaining formulas from \eqref{eq:exact2}.
By \eqref{eq:slice} with $h=f_{\rm BP}$,
$\Rad^*(K*\Rad f_{\rm BP})= 4\pi \mathcal M_Kf_{\rm BP}$, which is
\eqref{eq:exact3}; with $\F f_{\rm BP}=2\cos(cT|\xi|)\F g$ this is
\eqref{eq:exact4}. Finally, $\mathcal M_K$ commutes with $\W_T$, so
$2\W_T\mathcal M_K\W_Tf=\mathcal M_K\,2\W_Tg=\mathcal M_Kf_{\rm BP}$, which is
\eqref{eq:exact1}.
\end{proof}

\begin{corollary}[Truncated ramp case $cT\ge2R$] \label{cor:trunc}
Let $cT\ge2R$ and $2R\le a\le2cT-2R$. Then $K_a \triangleq\chi_{[-a,a]}\lambda$
satisfies \eqref{eq:K}, and exact reconstruction formulas  \eqref{eq:exact2}--\eqref{eq:exact1} hold with
\begin{equation}\label{eq:Mb}
\frac{\widehat{K_a}(\rho)}{\rho}= \frac2\pi\Bigl[\Si(a\rho)+\frac{\cos(a\rho)}{a\rho}\Bigr]
\end{equation}
where $\Si(x)\triangleq\int_0^x\sin(t)/t\,\mathrm{d}t$ is the sine integral.
Equivalently, $f=f_{\rm BP}+\kappa_a*f_{\rm BP}$ on $B_d(0,R)$ with
\begin{equation}\label{eq:cb}
\kappa_a(z)=\frac{\sqrt{|z|^2-a^2}}{\pi^2\,a\,|z|^2}\,\chi_{\{|z|>a\}} .
\end{equation}
\end{corollary}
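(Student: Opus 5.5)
First I check \eqref{eq:K} for $K_a=\chi_{[-a,a]}\lambda$, understood as $\lambda$ restricted to $[-a,a]$; then $K_a-\lambda=-\chi_{\{|s|>a\}}\lambda\in L^1$, since $\lambda(s)=-1/(\pi s^2)$ decays like $s^{-2}$. For $|u|<2R$ we have $\A_TK_a(u)=K_a(u)+\tfrac12K_a(u-2cT)+\tfrac12K_a(u+2cT)$. Since $a\ge2R>|u|$, the first term is $\lambda(u)$. For the shifted terms, $|u\mp2cT|>2cT-2R\ge a$, so both lie outside $[-a,a]$ and vanish. Hence \eqref{eq:K} holds, and Theorem~\ref{thm:filter} gives \eqref{eq:exact2}--\eqref{eq:exact1}.

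Next I compute the symbol via $\widehat{K_a}=\hat\lambda-\widehat{(\lambda-K_a)}$, with $\hat\lambda(\rho)=|\rho|$. The tail $\lambda-K_a=-\chi_{\{|s|>a\}}/(\pi s^2)$ is an even $L^1$ function, and its transform is $-\tfrac{2}{\pi}\int_a^\infty\cos(\rho s)s^{-2}\,ds$. One integration by parts gives
\[
\int_a^\infty\frac{\cos(\rho s)}{s^{2}}\,ds=\frac{\cos(a\rho)}{a}-\rho\int_{a\rho}^\infty\frac{\sin t}{t}\,dt=\frac{\cos(a\rho)}{a}-\rho\Bigl(\frac\pi2-\Si(a\rho)\Bigr).
\]
Therefore, for $\rho>0$,
\[
\widehat{K_a}(\rho)=\rho+\frac2\pi\Bigl(\frac{\cos a\rho}{a}-\frac{\pi\rho}{2}+\rho\,\Si(a\rho)\Bigr).
\]
Dividing by $\rho$ yields \eqref{eq:Mb}. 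I must check the signs here carefully, since this is where an error is most likely.

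For the physical-space form, I write $\widehat{K_a}(\rho)/\rho=1+m(\rho)$ with
\[
m(\rho)=\frac{\widehat{K_a-\lambda}(\rho)}{\rho}.
\]
This gives $\mathcal M_{K_a}=I+\mathcal M[m]$. The goal is to show that $\mathcal M[m]$ is convolution with the radial kernel $\kappa_a$. I would use \eqref{eq:slice} directly: $\Rad^*(k*\Rad h)=4\pi\,\mathcal M[\hat k(\rho)/\rho]h$ with $k=K_a-\lambda=\chi_{\{|s|>a\}}/(\pi s^2)$.

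The operator $h\mapsto\Rad^*(k*\Rad h)$ is convolution with the function $z\mapsto\int_{S^1}k(z\cdot\varphi)\,d\varphi$. This follows because $\Rad^*(k*\Rad h)=(\Rad^*k)*h$ in the sense that $\Rad^*$ of a function of $s$ alone is a ridge-sum. Concretely,
\[
\int_{S^1}\int_{\R} k(x\cdot\varphi-s)\,\Rad h(\varphi,s)\,ds\,d\varphi=\int_{\R^2}h(y)\int_{S^1}k((x-y)\cdot\varphi)\,d\varphi\,dy.
\]
So $\kappa_a(z)=\frac1{4\pi}\int_{S^1}k(z\cdot\varphi)\,d\varphi$.

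Writing $z\cdot\varphi=|z|\cos\theta$, this becomes
\[
\kappa_a(z)=\frac{1}{4\pi^2|z|^2}\int_0^{2\pi}\frac{\chi_{\{|\cos\theta|>a/|z|\}}}{\cos^2\theta}\,d\theta .
\]
The integral vanishes unless $|z|>a$. Otherwise, using $\int\sec^2\theta\,d\theta=\tan\theta$ over the four arcs, it equals $4\tan\theta_0$ with $\cos\theta_0=a/|z|$, that is, $4\sqrt{|z|^2-a^2}/a$. This gives exactly \eqref{eq:cb}.

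The main obstacle I expect is rigor in these last steps. The kernel $\kappa_a$ decays like $|z|^{-1}$, so it is not in $L^1$, and the Fubini exchange needs justification. I would handle this by noting that $f_{\rm BP}$ is smooth with decay, or alternatively by passing through \eqref{eq:exact2} directly. In \eqref{eq:exact2} the convolution $K_a*\Rad f_{\rm BP}$ is unproblematic, because $k$ is integrable and $\Rad f_{\rm BP}$ is bounded.
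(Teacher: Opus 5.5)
Your proposal is correct and follows the paper's proof essentially step by step. It uses the same support check for \eqref{eq:K}, the same integration by parts for $\widehat{K_a}$ (your signs come out right), and the same evaluation of $\frac1{4\pi}\int_{S^1}\chi_{\{|z\cdot\varphi|>a\}}\bigl(\pi(z\cdot\varphi)^2\bigr)^{-1}\,d\varphi=\tan\theta_0/(\pi^2|z|^2)$ via \eqref{eq:slice}. Your convergence worry is easily settled: $f_{\rm BP}=f+\W_{2T}f$ is smooth and, by finite propagation speed, supported in $\overline{B_d(0,R+2cT)}$, so convolving it with the locally integrable $\kappa_a$ is unproblematic.
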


\begin{proof}
For $|u|<2R\le a$ we have $K_a(u)=\lambda(u)$ and $|u\mp2cT|>2cT-2R\ge a$,
so $K_a(u\mp2cT)=0$. Since $K_a=\lambda+\chi_{\{|s|>a\}}/(\pi s^2)$,
integration by parts gives
$\widehat{K_a}(\sigma)=\frac2\pi[\sigma\Si(a\sigma)+\cos(a\sigma)/a]$, which is
\eqref{eq:Mb}. By \eqref{eq:slice}, $\lambda$ contributes the
identity, and $\chi_{\{|s|>a\}}/(\pi s^2)$ the kernel
$\frac1{4\pi}\int_{S^1}\chi_{\{|z\cdot\varphi|>a\}}(\pi(z\cdot\varphi)^2)^{-1}
d\varphi=\frac{\tan\theta_0}{\pi^2|z|^2}$ for $|z|>a$, where
$\cos\theta_0=a/|z|$; this is \eqref{eq:cb}.
\end{proof}

Thus, for $cT\ge2R$, exact reconstruction is ordinary FBP
of $f_{\rm BP}$ with the ramp kernel cut off at $|s|=a$; the cut-off removes
the interaction with the two shifted bands. Since $\kappa_a=0$ on $B_d(0,a)$ and
$a\ge2R$, the correction $\kappa_a*f_{\rm BP}$ on $B_d(0,R)$ uses $f_{\rm BP}$ only
outside $B_d(0,R)$, where $f_{\rm BP}=\W_{2T}f$: the filter predicts the interior
tail of \eqref{eq:bias} from the outgoing part of the field.
$\widehat{K_a}(\rho)/\rho\to1$ as $\rho\to\infty$ and $\widehat{K_a}(\rho)/\rho\sim2/(\pi a\rho)$ as
$\rho\to0$; the coefficient of this low-frequency gain is smallest for
$a=2cT-2R$.

\begin{corollary}[All $cT>R$]\label{cor:series}
Let $R<cT$, $q(u)\triangleq\frac1{2\pi}[(u-2cT)^{-2}+(u+2cT)^{-2}]$ and
$S\triangleq\{2cT-2R<|u|<2R\}$ (empty if $cT\ge2R$). Define $p=0$ for $|u|\ge2R$,
$p=q$ for $|u|<2R$, $u\notin S$, and
$p(u)=\frac43[q(u)-\frac12q(u\mp2cT)]$ for $u\in S$, $\pm u>0$. Then
$K=\lambda+p$ satisfies \eqref{eq:K}, and \eqref{eq:exact2}--\eqref{eq:exact1} hold with
$\widehat K(\rho)/\rho=1+\hat p(\rho)/\rho$.
\end{corollary}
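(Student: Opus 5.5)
The plan is to verify the hypotheses of Thm.~\ref{thm:filter} for $K=\lambda+p$: that $K$ is even, that $K-\lambda=p\in L^1(\R)$, and that \eqref{eq:K} holds. The theorem then gives \eqref{eq:exact2}--\eqref{eq:exact1}. The formula $\widehat K(\rho)/\rho=1+\hat p(\rho)/\rho$ follows from linearity of the Fourier transform and $\hat\lambda(\rho)=\rho$, as in \eqref{eq:slice}.

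\emph{Reduction of \eqref{eq:K}.} Write $\A_TK=\A_T\lambda+\A_Tp$. For $|u|<2R<2cT$ the points $u\mp2cT$ are nonzero. There $\lambda$ is the function $-1/(\pi s^2)$, so
\[
\tfrac12\lambda(u-2cT)+\tfrac12\lambda(u+2cT)=-q(u).
\]
Near $u=0$ the singular part $\lambda$ passes through unchanged. Hence \eqref{eq:K} on $(-2R,2R)$ is equivalent to the functional equation
\[
p(u)+\tfrac12p(u-2cT)+\tfrac12p(u+2cT)=q(u),\qquad |u|<2R .
\]
Since $\supp p\subseteq[-2R,2R]$, the term $p(u-2cT)$ can be nonzero only if $u\in(2cT-2R,2cT+2R)$. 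Together with $|u|<2R$ this means $u\in S$ with $u>0$. Symmetrically, $p(u+2cT)$ matters only for $u\in S$ with $u<0$. Off $S$ the equation reads $p=q$, which is the definition.

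\emph{The coupled system on $S$.} This is the main step. Take $u\in S$ with $u>0$ and set $v=u-2cT$. One checks that $v\in S$ with $v<0$, and that $u+2cT$ and $v-2cT$ lie outside $[-2R,2R]$. The equations at $u$ and at $v$ therefore form a closed $2\times2$ linear system:
\[
p(u)+\tfrac12p(v)=q(u),\qquad p(v)+\tfrac12p(u)=q(v).
\]
Its determinant is $1-\tfrac14=\tfrac34\neq0$, and eliminating $p(v)$ gives $p(u)=\tfrac43\bigl[q(u)-\tfrac12q(u-2cT)\bigr]$. The case $u<0$ is symmetric and gives the stated formula with $u+2cT$. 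So the defined $p$ solves the equation everywhere on $(-2R,2R)$.

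\emph{Regularity and evenness.} The function $q$ is even, and the definition of $p$ is symmetric under $u\mapsto-u$: the sign $\pm$ swaps the two shifts. Hence $p$, and so $K$, is even. The singularities of $q$ sit at $\pm2cT$. For $|u|<2R$ we have $|u\mp2cT|>2cT-2R>0$, so $q$ is bounded there. On $S$ the shifted term $q(u\mp2cT)$ involves $u^{-2}$ and $(u\mp4cT)^{-2}$, and these are bounded because $|u|>2cT-2R>0$ and $|u\mp4cT|>2cT$. Thus $p$ is bounded with compact support, so $p\in L^1(\R)$. Thm.~\ref{thm:filter} now applies.
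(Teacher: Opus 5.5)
Your proof is correct and follows the paper's argument. The paper writes the reduced equation as $(I+P)p=q$ on $(-2R,2R)$ and inverts it via $P^2=\frac14\chi_S$, i.e.\ $(I+P)(I-P)=I-\frac14\chi_S$, which is your pairing $u\leftrightarrow u-2cT$ and the inversion of the $2\times2$ system with determinant $\frac34$ written globally; your explicit checks that $p$ is even, bounded and compactly supported (so $K$ meets the hypotheses of Thm.~\ref{thm:filter}) fill in details the paper leaves implicit.
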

\begin{proof}
For $K=\lambda+p$ with $\supp p\subseteq[-2R,2R]$, \eqref{eq:K} is
$(I+P)p=q$ on $(-2R,2R)$ with
$Pp(u)=\frac12[p(u-2cT)+p(u+2cT)]$, because
$-\frac12\lambda(u\mp2cT)$ sum to $q$. For $|u|<2R$ at most one shift
$u\mp2cT$ stays in $(-2R,2R)$, and only if $u\in S$; hence $Pp=0$ off $S$, and
on $S$ two shifts return to $u$, so $P^2=\frac14\chi_S$. Therefore
$(I+P)(I-P)=I-\frac14\chi_S$ and $p=(I-P)\bigl(q/(1-\frac14\chi_S)\bigr)$,
which is the stated formula.
\end{proof}

\begin{corollary}[Filtered back-propagation by circular means]\label{cor:phys}
Let $M_h(x,r)\triangleq\frac1{2\pi}\int_{S^1}h(x+r\theta)\,d\theta$,
$cT\ge2R$ and $2R\le a\le2cT-2R$. Then on $B_d(0,R)$ we have
\begin{equation}\label{eq:phys}
f(x)=\frac2{cT}\int_0^{cT}\frac{r}{\sqrt{c^2T^2-r^2}}\,
\frac{\partial}{\partial r}\bigl[r\,M_h(x,r)\bigr]\,dr ,
\end{equation}
where, with $K_a$ from Cor.~\ref{cor:trunc},
\[
h\triangleq g+\frac{2}{\pi a}\int_a^\infty\frac{\sqrt{\rho^2-a^2}}{\rho}\,
M_g(\cdot,\rho)\,d\rho=\mathcal M_{K_a}g .
\]
For general $cT>R$, \eqref{eq:phys} holds with
\[
h\triangleq g+2\pi\int_0^\infty\rho\,\kappa(\rho)\,M_g(\cdot,\rho)\,d\rho=\mathcal M_Kg ,
\]
where $K=\lambda+p$ is from Cor.~\ref{cor:series} and the radial kernel
$\kappa\triangleq\frac1{4\pi}\Rad^*p$ of $\mathcal M_K-I$ is
\[\kappa(\rho)=\frac1\pi\int_0^{\min(\rho,2R)}\frac{p(s)\,ds}{\sqrt{\rho^2-s^2}} .\]
\end{corollary}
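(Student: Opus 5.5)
Starting from \eqref{eq:exact1}, write $f=2\W_T\mathcal M_K\W_Tf=2\W_T h$ with $h\triangleq\mathcal M_K g$. Here we use that $\mathcal M_K$ commutes with $\W_T$ and that $g=\W_Tf$. The argument then has two separate parts. First, express $2\W_T h$ for a general $h$ by circular means; this is Poisson's formula. Second, identify $h=\mathcal M_K g$ as $g$ plus a radial convolution written in circular means. For the first part, take Poisson's formula as in the proof of Prop.~\ref{prop:bias}, written in polar coordinates around $x$:
\[
\W_T h(x)=\partial_t\Bigl[t\int_0^{ct}\frac{r\,M_h(x,r)}{\sqrt{c^2t^2-r^2}}\,dr\Bigr]_{t=T}.
\]
The prefactor comes out right because $\int_{B(x,ct)}\cdots dy=2\pi\int_0^{ct}r M_h\cdots dr$, divided by $2\pi c$ and with the normalisation $ct\cdot\frac1c$.

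To differentiate, substitute $r=ct\,s$. This gives $t\int_0^1 ct\,s\,M_h(x,cts)(1-s^2)^{-1/2}\,ds$, which is $ct^2$ times a fixed-domain integral. Differentiating in $t$ produces $2ct\int_0^1 sM_h(1-s^2)^{-1/2}ds+ct^2\int_0^1 s\,cs\,\partial_rM_h(1-s^2)^{-1/2}ds$. Undoing the substitution, both terms combine into $\frac1{cT}\int_0^{cT}\frac{r}{\sqrt{c^2T^2-r^2}}\,\partial_r[rM_h(x,r)]\,dr$, since $\partial_r(rM_h)=M_h+r\partial_rM_h$. Multiplying by $2$ gives \eqref{eq:phys}. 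The bookkeeping of the powers of $r$ and $cT$ must be checked carefully, because the claimed formula has $r\,\partial_r(rM_h)$ and this has to match $2M_h\cdot r+r^2\partial_rM_h$ up to the $1/(cT)$ normalisation. This is a direct chain-rule computation.

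For the second part, in the case $cT\ge2R$ we take $K_a$ from Cor.~\ref{cor:trunc}, which gives $\mathcal M_{K_a}=I+\kappa_a*$. Since $\kappa_a$ is radial, polar coordinates give $\kappa_a*g(x)=\int_0^\infty\kappa_a(\rho)\,2\pi\rho\,M_g(x,\rho)\,d\rho$. Inserting \eqref{eq:cb} yields $2\pi\rho\cdot\sqrt{\rho^2-a^2}/(\pi^2a\rho^2)=\frac{2}{\pi a}\sqrt{\rho^2-a^2}/\rho$, which is exactly the stated $h$. For general $cT>R$ we use $K=\lambda+p$ from Cor.~\ref{cor:series}. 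By \eqref{eq:slice}, $\mathcal M_K=I+\mathcal M_p$, and $\mathcal M_p h=\frac1{4\pi}\Rad^*(p*\Rad h)=\frac1{4\pi}(\Rad^*p)*h$. Here $\Rad^*p$ denotes the radial function $z\mapsto\int_{S^1}p(z\cdot\varphi)\,d\varphi$; this follows from the standard identity $\Rad^*(p*\Rad h)=(\Rad^*p)*h$. Parametrising $\varphi$ by angle, with $s=\rho\cos\theta$ and $d\theta=ds/\sqrt{\rho^2-s^2}$, and using that $p$ is even, we get $\Rad^*p(\rho)=4\int_0^{\rho}p(s)(\rho^2-s^2)^{-1/2}ds$. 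Since $\supp p\subseteq[-2R,2R]$, the upper limit is $\min(\rho,2R)$, which gives the stated $\kappa$. Writing the convolution in polar form then gives $h=g+2\pi\int_0^\infty\rho\,\kappa(\rho)M_g(\cdot,\rho)\,d\rho$.

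The main obstacle is justifying these steps for the distributional and tempered objects involved, not the algebra. Three points need care. The kernel $\kappa_a$ decays only like $1/|z|$, so $\kappa_a*g$ needs $g$ to decay; this holds because $g=\W_Tf$ is compactly supported by finite propagation speed, so all the $\rho$-integrals are finite. The identity $\Rad^*(p*\Rad h)=(\Rad^*p)*h$ must be applied to an integrable kernel; here $p$ is bounded and compactly supported, since $q$ is smooth on $|u|<2R<2cT$. Finally, differentiating under the integral in the Poisson representation needs $h$ smooth. Here $h=\mathcal M_K g$ is smooth because $\widehat K(\rho)/\rho$ is a bounded multiplier and $g\in C_c^\infty$.
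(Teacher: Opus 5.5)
Your route is the paper's: write $f=2\W_Th$ with $h=\mathcal M_Kg$ via Thm.~\ref{thm:filter}, evaluate $\W_Th$ by Poisson's formula in polar coordinates with $r=cts$, and identify $h$ by writing the radial convolution in circular means. Your second part is correct, namely $2\pi\rho\,\kappa_a(\rho)=\frac{2}{\pi a}\sqrt{\rho^2-a^2}/\rho$ and the Abel-type evaluation of $\frac1{4\pi}\Rad^*p$. The first part, however, contains a genuine error.

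The Poisson formula you start from, $\W_Th(x)=\partial_t\bigl[t\int_0^{ct}rM_h(x,r)(c^2t^2-r^2)^{-1/2}dr\bigr]_{t=T}$, has the wrong prefactor. The $2\pi$ from polar coordinates cancels the $2\pi$ in $1/(2\pi c)$ and leaves only $1/c$:
\[
\W_Th(x)=\frac1c\,\partial_t\Bigl[\int_0^{ct}\frac{r\,M_h(x,r)}{\sqrt{c^2t^2-r^2}}\,dr\Bigr]_{t=T}.
\]
Your bracket therefore carries a spurious factor $ct$. As a check, if $h\equiv1$ on $B(x,cT)$ then $\W_Th(x)=1$; the correct formula gives $\frac1c\partial_t(ct)=1$, while yours gives $2cT$. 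Consequently your differentiation yields
\[
\int_0^{cT}\frac{r\,(2M_h+r\partial_rM_h)}{\sqrt{c^2T^2-r^2}}\,dr=\int_0^{cT}\frac{r\,M_h}{\sqrt{c^2T^2-r^2}}\,dr+cT\,\W_Th(x).
\]
No normalisation turns $2M_h+r\partial_rM_h$ into $\partial_r(rM_h)=M_h+r\partial_rM_h$. So the claim that ``both terms combine'' into the target is false. You noticed exactly this mismatch but asserted it away instead of resolving it.

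With the correct prefactor the computation closes at once. Set $I(t)=\int_0^1 s\,M_h(x,cts)(1-s^2)^{-1/2}ds$; the bracket equals $ct\,I(t)$. Then
\[
\frac1c\partial_t[ct\,I(t)]=I(t)+tI'(t)=\int_0^1 s\,\partial_r[rM_h(x,r)]_{r=cts}(1-s^2)^{-1/2}ds .
\]
Undoing the substitution gives $\frac1{cT}\int_0^{cT}r(c^2T^2-r^2)^{-1/2}\partial_r[rM_h(x,r)]\,dr$, which is the paper's computation.

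A smaller inaccuracy: $\widehat K(\rho)/\rho$ is not a bounded multiplier. For $K_a$ it behaves like $2/(\pi a\rho)$ as $\rho\to0$, and for $K=\lambda+p$ generally like $\hat p(0)/\rho$. Argue smoothness of $h$ instead from $h=g+\kappa*g$ with $g\in C_c^\infty$. Here $\kappa$ is bounded and $O(1/\rho)$, hence locally integrable, so all derivatives fall on $g$.
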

\begin{proof}
For a radial kernel $\kappa$, polar coordinates give
$\kappa*g=2\pi\int_0^\infty\rho\,\kappa(\rho)\,M_g(\cdot,\rho)\,d\rho$; with $\kappa=\kappa_a$ from
\eqref{eq:cb} this gives $h=g+\kappa_a*g=\mathcal M_{K_a}g$; for $K=\lambda+p$, \eqref{eq:slice} and $\Rad^*(p*\Rad g)=(\Rad^*p)*g$ give $h=g+\kappa*g=\mathcal M_Kg$. Poisson's formula \cite{john1982}
reads $\W_Th(x)=\frac1c\partial_t\bigl[\int_0^{ct} r\,M_h(x,r)(c^2t^2-r^2)^{-1/2}dr\bigr]_{t=T}$;
substituting $r=cts$ and differentiating under the integral gives
$\W_Th(x)=\frac1{cT}\int_0^{cT} r(c^2T^2-r^2)^{-1/2}\partial_r[r\,M_h(x,r)]\,dr$. Finally
$f=2\W_T\mathcal M_K\W_Tf=2\W_Th$ on $B_d(0,R)$ by Thm.~\ref{thm:filter}. Since
$\supp g\subseteq \overline{B_d(0,R+cT)}$, only $\rho\le2R+2cT$ contributes.
\end{proof}

\begin{remark}
Within the class of Thm.~\ref{thm:filter}, \eqref{eq:K} is also necessary:
$2\W_T\mathcal M_K\W_T-I$ has the radial kernel
$k(r)=\frac1\pi\int_0^r(\A_TK-\lambda)(s)(r^2-s^2)^{-1/2}ds$ for $r<2R$, which vanishes on
$B_d(0,2R)$ if and only if $\A_TK=\lambda$ on $(-2R,2R)$, by injectivity of the
Abel transform. Like the band selection of \cite{nuster2010}, the filtered
back-propagation is exact on $B_d(0,R)$, but it needs no Radon transform of the
camera image, only circular means or FFTs. For $cT\le R$, writing $K=\lambda+p$,
\eqref{eq:K} would require the integrable function $(I+P)p$ to equal $q$ on
$(-2R,2R)$, which is not integrable near $\pm2cT\in[-2R,2R]$; hence no filter
of this class exists.
\end{remark}

\section{Numerical results}\label{sec:num}

We use $R=c=1$ and grid spacing $h=0.02$ on $[-L,L]^2$ with $L=2R+2cT+0.5$
(no periodic wrap-around on $B_d(0,R)$). Data are simulated on a
three times finer grid and averaged over the cells. Phantoms are a smoothed disc
of radius $0.85R$ and a vessel-like tree in $B_d(0,0.8R)$ (Fig.~\ref{fig:num}a);
errors are relative $L^2$ errors on $B_d(0,0.98R)$. $\W_T$ is applied by FFT.
Filtered back-propagation computes $f_{\rm BP}+\kappa*f_{\rm BP}$ by FFT
convolution with $\kappa$ from Cor.~\ref{cor:phys}, cell-averaged and cut off at
$|z|=2R+2cT$, which is exact on $B_d(0,R)$ ($\kappa_a$, $a=2cT-2R$, gives the
same errors for $cT\ge2R$).

\begin{figure}[t]
\centering
\includegraphics[width=0.62\linewidth]{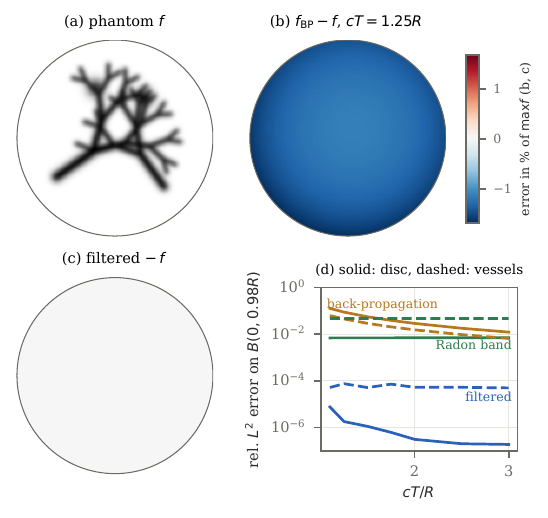}
\caption{(a) Vessel phantom. (b), (c) Errors of back-propagation and of
filtered back-propagation at $cT=1.25R$, same colour scale; the error in (c) is below $0.02\,\%$ and thus invisible. (d) Relative
$L^2$ error on $B_d(0,0.98R)$ versus $cT/R$.}\label{fig:num}
\end{figure}

Back-propagation has errors of $0.7$--$12.7\,\%$ for $cT/R=3,\dots,1.1$: a smooth
negative bias (Fig.~\ref{fig:num}b) that agrees with \eqref{eq:bias} up to
$1.3\cdot10^{-4}$. Filtered back-propagation reduces the error to at most
$7.5\cdot10^{-5}$ (Fig.~\ref{fig:num}c,d); for $h=0.01$ it drops to $10^{-6}$, so
the remainder is discretisation error, whereas the back-propagation error does
not change. Radon-domain band selection \cite{nuster2010} (scikit-image, 720
angles) gives $4.7\,\%$ for the vessels, which equals the discretisation error of
$\Rad^{-1}\Rad f$. With $2\,\%$ noise, back-propagation and filtered
back-propagation amplify white noise equally (gain $1.41$--$1.44$), since the
filter changes only low frequencies. Per image, filtered back-propagation takes $0.015$\,s, the Radon route $1.4$\,s.

\section{Conclusion}\label{sec:concl}

Back-propagation of single-time wave data is exact in odd dimensions but
biased in two, the dimension of full-field PAT projection images.
An explicit radial filter turns it into an exact filtered back-propagation
for every $cT>R$, at the cost of back-propagation with a modified Fourier
multiplier. Open questions are the
optimal choice among the exact filters under noise, the case $cT\le R$, and
the use of the filter as a preconditioner for variable sound speed and
attenuation \cite{zangerl2019}.

\section{Compliance with ethical standards}\label{sec:ethics}
This is a theoretical and numerical simulation study for which no ethical
approval was required.

\section{Acknowledgments}\label{sec:ack}
No funding was received; the author has no competing interests.
The research question, approach and main results were conceived by the author.
Claude (Anthropic, Opus 5.5) assisted in working out proof details (Sec.~2),
drafting text (all sections) and drafting the simulation code (Sec.~3). The
author verified all results and takes full responsibility for the content.

\bibliographystyle{IEEEbib}
\bibliography{refs}

\begin{thebibliography}{1}

\bibitem{nuster2010}
R.~Nuster, G.~Zangerl, M.~Haltmeier, and G.~Paltauf,
\newblock ``Full field detection in photoacoustic tomography,''
\newblock {\em Opt. Express}, vol. 18, no. 6, pp. 6288--6299, 2010.

\bibitem{poudel2019survey}
J.~Poudel, Y.~Lou, and M.~A. Anastasio,
\newblock ``A survey of computational frameworks for solving the acoustic
  inverse problem in three-dimensional photoacoustic computed tomography,''
\newblock {\em Phys. Med. Biol.}, vol. 64, no. 14, pp. 14TR01, 2019.

\bibitem{nuster2019}
R.~Nuster and G.~Paltauf,
\newblock ``Optimization of image quality in photoacoustic tomography using
  spatial projection data,''
\newblock in {\em Proc. SPIE 10878, Photons Plus Ultrasound: Imaging and
  Sensing}, 2019, p. 108784N.

\bibitem{nuster2021}
R.~Nuster, D.~Brandmueller, and G.~Paltauf,
\newblock ``Benchtop photoacoustic tomograph with camera based ultrasound
  detection,''
\newblock in {\em Proc. SPIE-OSA 11923, European Conferences on Biomedical
  Optics}, 2021, p. 119230L.

\bibitem{zangerl2019}
G.~Zangerl, M.~Haltmeier, L.~V. Nguyen, and R.~Nuster,
\newblock ``Full field inversion in photoacoustic tomography with variable
  sound speed,''
\newblock {\em Appl. Sci.}, vol. 9, no. 8, pp. 1563, 2019.

\bibitem{nguyen2022}
L.~V. Nguyen, M.~Haltmeier, R.~Kowar, and N.~Do,
\newblock ``Analysis for full field photoacoustic tomography with variable
  sound speed,''
\newblock {\em SIAM J. Imaging Sci.}, vol. 15, no. 3, pp. 1213--1228, 2022.

\bibitem{natterer2001}
F.~Natterer,
\newblock {\em The Mathematics of Computerized Tomography}, vol.~32 of {\em
  Classics in Applied Mathematics},
\newblock SIAM, Philadelphia, 2001.

\bibitem{haltmeier2014universal}
M.~Haltmeier,
\newblock ``Universal inversion formulas for recovering a function from
  spherical means,''
\newblock {\em SIAM J. Math. Anal.}, vol. 46, no. 1, pp. 214--232, 2014.

\bibitem{john1982}
F.~John,
\newblock {\em Partial Differential Equations},
\newblock Springer, New York, 4th edition, 1982.

\end{thebibliography}

\end{document}